\documentclass[12pt]{amsart}
\usepackage{amssymb,amsmath,amsthm,amscd,epsfig,mathrsfs}
\usepackage[backref]{hyperref}
\usepackage[normalem]{ulem}
\usepackage[all,cmtip]{xy}
\usepackage{ stmaryrd }

\setlength{\textwidth}{6.5in}
\setlength{\textheight}{8.8in}
\setlength{\oddsidemargin}{0in}
\setlength{\evensidemargin}{0in}
\addtolength{\topmargin}{-.5in}

\newtheorem{thm}{Theorem}[section]
\newtheorem{lem}[thm]{Lemma}
\newtheorem{cor}[thm]{Corollary}

\theoremstyle{definition}

\newtheorem{question}[thm]{Question}

\newtheorem{example}[thm]{Example}

\theoremstyle{remark}
\newtheorem{rmk}[thm]{Remark}





\usepackage[usenames,dvipsnames]{color}

\def\Z{\ifmmode{{\mathbb Z}}\else{${\mathbb Z}$}\fi}
\def\Q{\ifmmode{{\mathbb Q}}\else{${\mathbb Q}$}\fi}
\def\C{\ifmmode{{\mathbb C}}\else{${\mathbb C}$}\fi}
\def\P{\ifmmode{{\mathbb P}}\else{${\mathbb P}$}\fi}
\def\H{\ifmmode{{\mathrm H}}\else{${\mathrm H}$}\fi}
\def\G{\ifmmode{{\mathbb G}}\else{${\mathbb G}$}\fi}
\def\R{\ifmmode{{\mathbb R}}\else{${\mathbb R}$}\fi}
\def\F{\ifmmode{{\mathbb F}}\else{${\mathbb F}$}\fi}
\def\O{\ifmmode{{\cal O}}\else{${\cal O}$}\fi}
\def\D{\ifmmode{{\cal{D}}^b}\else{${{\cal{D}}^b}$}\fi}

\newcommand{\Xbar}{{\overline{X}}}


\newcommand{\calA}{{\mathcal A}}
\newcommand{\calB}{{\mathcal B}}
\newcommand{\calC}{{\mathcal C}}

\newcommand{\calO}{{\mathcal O}}

\newcommand{\calX}{{\mathcal X}}



\DeclareMathOperator{\Char}{char}
\DeclareMathOperator{\inv}{inv}

\DeclareMathOperator{\im}{im}

\DeclareMathOperator{\Pic}{Pic}

\DeclareMathOperator{\Spec}{Spec}

\DeclareMathOperator{\Proj}{Proj}

\DeclareMathOperator{\et}{\acute{e}t}

\DeclareMathOperator{\cores}{cores}

\DeclareMathOperator{\id}{id}

\usepackage{todonotes}

\DeclareMathOperator{\CH}{CH}

\newcommand{\A}{\mathbb{A}}

\DeclareMathOperator{\BM}{BM}



\newcommand{\isom}{\simeq}



\newcommand{\Br}{\mathrm{Br}}
    \DeclareFontFamily{U}{wncy}{}
    \DeclareFontShape{U}{wncy}{m}{n}{<->wncyr10}{}
    \DeclareSymbolFont{mcy}{U}{wncy}{m}{n}
    \DeclareMathSymbol{\Sha}{\mathord}{mcy}{"58} 

\begin{document}

\title[Brauer classes that never obstruct the Hasse principle]{Index of fibrations and Brauer classes that never obstruct the Hasse principle}
\author{Masahiro Nakahara}
\address{Department of Mathematics, Rice University, 6100 Main St 
Houston, TX, 77054}
\email{mn24@rice.edu}
\urladdr{}
\date{\today}

\begin{abstract}
Let $X$ be a smooth projective variety with a fibration into varieties that either satisfy a condition on representability of zero-cycles or that are torsors under an abelian variety. We study the classes in the Brauer group that never obstruct the Hasse principle for $X$. We prove that if the generic fiber has a zero-cycle of degree $d$ over the generic point, then the Brauer classes whose orders are prime to $d$ do not play a role in the Brauer--Manin obstruction. As a result we show that the odd torsion Brauer classes never obstruct the Hasse principle for del Pezzo surfaces of degree 2, certain K3 surfaces, and Kummer varieties.
\end{abstract}

\maketitle

\section{Introduction}

Let $X$ be a smooth projective geometrically integral variety over a number field $k$. Let $\A$ denote the adeles of $k$ and $X(\A)$ be the set of adelic points on $X$. Following Manin \cite{manin}, one can use any subset $H$ of the Brauer group $\Br(X):=\H^2_{\et}  (X,\G_m)_{\text{tors}}$ to form a subset $X(\A)^H\subseteq X(\A)$, which contains $X(k)$ by class field theory. This gives an inclusion reversing map
\begin{equation*}
\begin{array}{c @{{}\ {}} c @{{}\ {}} c}
\{\text{subsets of }\Br(X)\}&\longrightarrow &\{\text{subsets of }X(\A)\},\\
H&\longmapsto &X(\A)^H.
\end{array}
\end{equation*}
See \S\ref{sec:mainth} for details. Hence, any subset $H\subseteq\Br(X)$ gives the containments
\begin{equation}\label{contain}
X(\A)^\Br:=X(\A)^{\Br(X)}\subseteq X(\A)^H\subseteq X(\A).
\end{equation}
For any positive integer $n$, define the set of prime-to-$n$ torsion elements by
$$\Br(X)[n^\perp]:=\{\calA\in\Br(X)\mid m\calA=0\text{ for some } (n,m)=1\}.$$
We consider the following general question.

\begin{question}\label{q} Given a class $\calC$ of $k$-varieties and an integer $n>1$, does $X(\A)\neq\emptyset$ imply $X(\A)^{\Br(X)[n^\perp]}\neq\emptyset$ for all $X\in \calC$? In other words, is it true that $\Br(X)[n^\perp]$ never obstructs the Hasse principle?
\end{question}

In this paper, we study some nontrivial cases where the answer to Question \ref{q} is positive. Denote by $\Br_0(X)$ the subgroup in $\Br(X)$ of constant classes $\im[\Br(k)\to\Br(X)]$, arising from the structure morphism $X\to\Spec(k)$. If $\Br(X)[n^\perp]=\Br_0(X)[n^\perp]$ for every $X\in\calC$, then the answer to Question \ref{q} will be trivially positive. Hence a particular case of interest is when $n$ is chosen so that $\Br(X)[n^\perp]\supsetneq\Br_0(X)[n^\perp]$ for some $X\in\calC$. For example, we consider the class of degree $d$ del Pezzo surfaces, where the possible Brauer groups are known.

\begin{example}\label{deg3ex}
Let $\calC$ be the class of del Pezzo surfaces of degree 3 over $k$. Then $\Br(X)/\Br_0(X)$ has exponent 2 or 3 for any $X\in \calC$ \cite{swd93}. Moreover if  $\Br(X)/\Br_0(X)$ has exponent 2, then $X$ satisfies the Hasse principle \cite[Corollary 1]{swd93}. Hence Question \ref{q} has a positive answer for $n=3$.
\end{example}

In general, the possible Brauer groups for a class $\calC$ of varieties are difficult to compute; in particular it is already a difficult problem to find an $n$ such that $\Br(X)[n^\perp]=\Br_0(X)[n^\perp]$ for all $X\in\calC$.

Some recent results on Brauer--Manin obstructions are related to the framework of Question \ref{q}. For example, in \cite{is2015}, Ieronymou and Skorobogatov show that for the class $\calC$ of diagonal quartic surfaces in $\P^3_\Q$, the odd torsion part $\Br(X)[2^\perp]$ of the Brauer group does not obstruct the Hasse principle, i.e., if $X(\A_\Q)\neq\emptyset$ then $ X(\A_\Q)^{\Br[2^\perp]}\neq\emptyset$. This gives an answer to Question \ref{q} for diagonal quartics over $\Q$ with $n=2$. They also give conditions on the equation of $X\in\calC$ for the group $\Br(X)/\Br_0(X)$ to contain odd torsion elements, so in particular $\Br(X)[2^\perp]\supsetneq\Br_0(X)[2^\perp]$. In \cite{cv17}, Creutz and Viray consider a related but generally logically independent question. For the class $\calC$ of degree $d$ $k$-varieties in projective space, they look at the leftmost containment of \eqref{contain} and consider whether $X(\A)^{\Br[d^\infty]}\neq\emptyset\implies X(\A)^{\Br}\neq\emptyset$ holds. They write $\BM_d$ for this implication and prove it holds for some classes of varieties such as torsors under abelian varieties and Kummer varieties. This implication is false in general \cite[Theorem 6.5]{cv17}. They also write $\BM_d^\perp$ for the statement there is no prime-to-$d$ Brauer--Manin obstruction. A positive answer to Question \ref{q} for $n=d$ is equivalent to $\BM^\perp_{d}$.

Inspired by Creutz--Viray, one can also ask for a stronger version of Question \ref{q} of whether both $\BM_n$ and $\BM^\perp_{n}$  hold simultaneously,

\begin{question}\label{q2} Given a class $\calC$ of $k$-varieties and an integer $n>1$, does $X(\A)^B\neq\emptyset$ imply $X(\A)^{B+\Br(X)[n^\perp]}\neq\emptyset$ for all $X\in \calC$ and all subgroups $B\in \Br(X)$? In other words, is it true that $\Br(X)[n^\perp]$ plays no role in obstructing the Hasse principle?
\end{question}

Question \ref{q} is a special case of Question \ref{q2} where $B$ is trivial.

\subsection{Main results} For a smooth projective variety $Y$ over a number field $k$, let $A_0(Y)$ be the group of zero-cycles of degree zero, modulo rational equivalence. We say that $Y$ satisfies property (ZC) if for any field extension $K/k$ and $Q\in Y(K)$, the natural map $Y(K)\to A_0(Y_{K})$ given by $P\mapsto (P)-(Q)$ is surjective. For example, smooth projective curves of genus 1 and $k$-rational varieties satisfy (ZC) (see \ref{propp}, \ref{ppp3} for more examples). Define the index of a variety $Y$ to be the smallest positive integer $d$ such that $Y$ has a zero-cycle of degree $d$. We say that $Y$ satisfies weak approximation if the image of $Y(k)$ is dense in $Y(\A)$ with the product topology.

We prove the following theorem, which we use to study Question \ref{q2} (and hence also Question \ref{q}) over any number field for all degree 2 del Pezzo surfaces and some diagonal quartic surfaces.

\begin{thm}\label{main} Let $\pi\colon X\to Z$ be a morphism between smooth projective geometrically integral varieties over a number field $k$. Suppose that $Z$ satisfies weak approximation and there exists a Zariski open set $Z_0\subseteq Z$ such that the fiber $X_P$ satisfies (ZC) for any $P\in Z_0$. Suppose that the generic fiber over the function field $k(Z)$ has index $d$.  If $B\subset \Br(X)$ is a subgroup such that $X(\A)^B\neq\emptyset$, then $X(\A)^{B+\Br(X)[d^\perp]}\neq\emptyset$.
\end{thm}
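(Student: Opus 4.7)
The plan is to combine weak approximation on $Z$ with property (ZC) applied to a single fiber to produce a family of adelic points indexed by a positive integer $m$, choose $m$ to kill any given finite set of prime-to-$d$ Brauer classes, and extend to the full subgroup $\Br(X)[d^\perp]$ by compactness of $X(\A)$. Since $X(\A)$ is compact and each $X(\A)^\alpha$ is closed, the finite-intersection property reduces the problem to showing $X(\A)^{\langle F \rangle} \neq \emptyset$ for every finite subset $F \subseteq B + \Br(X)[d^\perp]$. So fix such an $F$, decompose its elements as $\beta_i + \alpha_i$ with $\beta_i \in B$ and $\alpha_i \in \Br(X)[d^\perp]$, and set $B' = \langle \beta_1, \ldots, \beta_r \rangle$ and $N = \lcm_i \ord(\alpha_i)$, which is coprime to $d$.

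First I would replace $(x_v) \in X(\A)^B$ by an adelic point $(y_v) \in X(\A)^{B'}$ whose image under $\pi$ is a single $k$-rational point $z \in Z_0(k)$. Using smoothness of $\pi$ on a dense open of $X$ and weak approximation on $Z$, one approximates $\pi(x_v)$ at the finitely many bad places---archimedean places, places of bad reduction, and the ramification of the $\beta_i$---by a suitable $z \in Z_0(k)$, lifts via the implicit function theorem to $y_v$ close to $x_v$ at those places, and uses Hensel plus Lang--Weil to obtain $y_v \in X_z(k_v)$ elsewhere; local constancy of Brauer evaluations then keeps $(y_v) \in X(\A)^{B'}$. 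After shrinking $Z_0$ if necessary, the generic-fiber zero-cycle of degree $d$ spreads out to a $k$-rational zero-cycle $\zeta_z$ on $X_z$ of degree $d$.

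The algebraic core is then short. For each $m \geq 1$ and each place $v$, the cycle $m\zeta_z - md\cdot y_v$ lies in $A_0(X_{z,k_v})$, so (ZC) on $X_z$ with basepoint $y_v$ produces $S_v^{(m)} \in X_z(k_v)$ with
\[ m\zeta_z \;\equiv\; (S_v^{(m)}) + (md-1)(y_v) \quad \text{in } \CH_0(X_{z,k_v}). \]
Evaluating any $\alpha \in \Br(X)$ and summing over $v$, using global reciprocity for $m\alpha(\zeta_z) \in \Br(k)$, yields
\[ \sum_v \inv_v \alpha(S_v^{(m)}) \;=\; -(md-1)\sum_v \inv_v \alpha(y_v). \]
For $\alpha = \beta_i \in B'$ the right-hand side is $0$ for every $m$, since $(y_v) \in X(\A)^{B'}$; for $\alpha = \alpha_i$ of order $n_i \mid N$, the choice $md \equiv 1 \pmod N$ (available since $\gcd(d,N)=1$) forces $n_i \mid md-1$ and the right-hand side again vanishes. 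Thus $(S_v^{(m)}) \in X(\A)^{\langle F \rangle}$, which by compactness finishes the proof. I expect the most delicate step to be the fiber reduction---simultaneously guaranteeing $X_z(k_v) \neq \emptyset$ at every place and preserving the $B'$-obstruction under local perturbations requires care at archimedean and bad-reduction places---whereas the congruence $md \equiv 1 \pmod N$ driving the whole argument uses only that $d$ is coprime to the orders of the relevant Brauer classes.
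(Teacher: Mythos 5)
Your overall strategy is the paper's: reduce to a finite subgroup by compactness, use weak approximation and the implicit function theorem to move the adelic point onto the fiber over a rational point $z\in Z_0(k)$ while preserving the relevant Brauer evaluations, specialize the degree-$d$ zero-cycle on the generic fiber to a $k$-rational zero-cycle $\zeta_z$ on $X_z$, and exploit (ZC) together with a multiplier $m$ satisfying $md\equiv 1$ modulo the orders of the prime-to-$d$ classes. The congruence computation and the reciprocity argument at the end are exactly right.

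The step that does not go through as written is the claim that you can arrange $y_v\in X_z(k_v)$ at \emph{every} place. At the places of $S$ this follows from the implicit function theorem, but outside $S$ the appeal to ``Hensel plus Lang--Weil'' is incomplete: Lang--Weil needs the reductions of $X_z$ to be geometrically integral (the hypotheses do not even guarantee that the generic fiber of $\pi$ is geometrically integral), and it only applies at places where $z$ reduces into the good locus of an integral model of $Z_0$ and where the residue field is large enough. The set of exceptional places depends on $z$, while $z$ is chosen after $S$, so you cannot simply enlarge $S$ in advance to absorb them. The paper sidesteps this entirely: $S$ is chosen so that every class in the finite group $H$ extends to $\Br(\calX)$ for a model $\calX$ over the $S$-integers, hence pairs trivially with all of $\CH_0(X_{k_v})$ for $v\notin S$. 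Consequently every sum in your computation localizes to $S$; the points $S_v^{(m)}$ are needed (and constructed via (ZC)) only for $v\in S$, and at $v\notin S$ one takes an arbitrary point of $X(k_v)$. If you replace ``every place'' by ``every place of $S$'' and add this triviality-of-evaluation observation, your argument closes up and coincides with the paper's proof.
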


We also prove a similar result when the fibers are torsors under abelian varieties. We use this to study Question \ref{q} for Kummer varieties.

\begin{thm}\label{main2} Let $\pi\colon X\to Z$ be a morphism between smooth projective geometrically integral varieties over a number field $k$. Suppose that $Z$ satisfies weak approximation. Suppose that the generic fiber $Y$ is a $k(Z)$-torsor under an abelian variety $A/k(Z)$, and that the order of $[Y]\in \H^1(k(Z),A)$ is $d$. If $B\subset \Br(X)$ is a subgroup such that $X(\A)^B\neq\emptyset$, then $X(\A)^{B+\Br(X)[d^\perp]}\neq\emptyset$.
\end{thm}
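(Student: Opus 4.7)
The plan is to follow the fibration strategy underlying Theorem \ref{main}, replacing its (ZC) plus zero-cycle input by the structure theory of torsors under abelian varieties. Given $(x_v)\in X(\A)^B$, it suffices to treat finitely many classes $\alpha_1,\ldots,\alpha_r\in\Br(X)[d^\perp]$ at a time and produce $(x_v')\in X(\A)^B$ with $\sum_v\inv_v(\alpha_i(x_v'))=0$ for every $i$. First, spread the abelian-torsor structure on the generic fiber out to a dense open $Z_0\subseteq Z$: over $Z_0$ the restriction $X|_{Z_0}$ becomes a torsor $\calX$ under an abelian scheme $\calA\to Z_0$, and for every $z\in Z_0(k)$ the specialized class $[X_z]\in\H^1(k,A_z)$ has order dividing $d$.

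The next step is to replace $(x_v)$ by an adelic point that lives on a single fiber $X_z$ for some $z\in Z_0(k)$. Fix a finite set of places $S$ that contains the archimedean places, the places of bad reduction for $\pi$, and all places where any of the chosen finitely many classes in $B\cup\{\alpha_i\}$ can have nontrivial residue along the fibration. By weak approximation on $Z$ one finds $z\in Z_0(k)$ close to $\pi(x_v)$ for $v\in S$; smoothness of $\pi$ over $Z_0$ together with the $v$-adic implicit function theorem then lifts this to local points $x_v'\in X_z(k_v)$ close to $x_v$. A standard application of Harari's formal lemma adjusts $(x_v')$ at the remaining places $v\notin S$ so that no new contributions are created for any of the chosen classes, while continuity of the Brauer evaluation preserves the invariants at $v\in S$.

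This reduces the claim to the special case in which $X$ is already a single $k$-torsor $W$ under an abelian variety $A'$ whose class $[W]\in\H^1(k,A')$ has order dividing $d$: one must show that $W(\A)^{B'}\neq\emptyset\Rightarrow W(\A)^{B'+\Br(W)[d^\perp]}\neq\emptyset$ for every subgroup $B'\subseteq\Br(W)$. This abelian-torsor case is essentially available from the methods of Creutz--Viray \cite{cv17}; the key mechanism is that via Tate--Poitou duality for $A'$, the Brauer--Manin pairing factors through a pairing of $A'(\A)$ with a quotient of $\Br(W)$ whose image is annihilated by $d$, so classes of order prime to $d$ can be neutralized by a translation within $A'(\A)$ that does not disturb the constraints imposed by $B'$. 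The main obstacle in the whole argument is the bookkeeping in the fibration step: the formal-lemma adjustment must preserve orthogonality to all of $B$ and not just the chosen $\alpha_i$, which requires isolating a finite collection of $B$-relevant classes whose invariants dominate the adelic sums outside $S$.
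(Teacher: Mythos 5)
Your overall strategy---use weak approximation on $Z$ to move the adelic point onto the fiber over a rational point of the good locus, then invoke Creutz--Viray's machinery for torsors under abelian varieties over $k$---is the paper's strategy. But as written the reduction has a genuine gap: the fiber $F=X_z$ over the chosen $z\in Z_0(k)$ need not be everywhere locally soluble. The point $z$ is only arranged to be $v$-adically close to $\pi(x_v)$ for $v\in S$; for $v\notin S$ it may even reduce outside the good locus, and $F(k_v)$ can be empty. So the statement you reduce to, ``$W(\A)^{B'}\neq\emptyset\Rightarrow W(\A)^{B'+\Br(W)[d^\perp]}\neq\emptyset$ for the $k$-torsor $W=F$,'' is never applicable because you never exhibit an adelic point of $F$. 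The paper avoids this by proving a variant of \cite[Corollary 4.3]{cv17} (Lemma~\ref{abvar}) for tuples of local points indexed by $S$ alone, with all Brauer--Manin sums taken over $S$ only; what makes this legitimate is that the finitely many classes in play extend to an integral model of the \emph{total space} $X$ over $\calO_{k,S}$ and hence pair trivially with $\CH_0(X_{k_v})$ for $v\notin S$, so the adelic point is completed by arbitrary points of $X(k_v)$ (not of $F$) outside $S$. This also replaces your appeal to Harari's formal lemma, which is the wrong tool here: all classes under consideration are unramified on all of $X$, so nothing needs to be ``adjusted'' outside a sufficiently large $S$.

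Two further points. Your goal ``produce $(x_v')\in X(\A)^B$ orthogonal to the chosen $\alpha_i$'' is too strong: after passing to the fiber and applying the Creutz--Viray construction one retains orthogonality only to the finitely many classes of $B$ that were tracked, not to all of $B$. You correctly identify this as the main obstacle but do not resolve it; the fix, as in the paper via \cite[Lemma 4.8]{cv17} and compactness of $X(\A)$, is to reduce instead to showing $X(\A)^H\neq\emptyset$ for every \emph{finite subgroup} $H\subset B+\Br(X)[d^\perp]$, so that only $H[d^\infty]\subset B$ ever needs to be controlled. Finally, the mechanism you describe for the torsor case (Tate--Poitou duality and neutralization ``by a translation within $A'(\A)$'') is not the one that works and is not what \cite{cv17} does: \cite[Lemma 4.6]{cv17} produces a morphism $\rho\colon Y\to Y$ of the $d$-covering with $\rho^*$ the identity on $H[d^\infty]$ and $\rho^*H[d^\perp]\subset\Br(k)$, and one simply composes the local points with $\rho$; the residual constant classes are then killed using the degree-$d^{2g}$ zero-cycle $\psi^{-1}(0)$. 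Citing \cite{cv17} for the endgame is fine, but the statement you quote from it must be the partial-adelic-point version for the argument to go through.
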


\begin{rmk} The case when $Z=\Spec k$ is a point was proven in \cite{cv17}.
\end{rmk}

\subsection{Applications}

Colliot-Th\'el\`ene and Sansuc \cite[\S V Question k1]{cts80} have conjectured that for geometrically rational surfaces, the Brauer--Manin obstruction is the only obstruction to the Hasse principle. In particular, it is known that del Pezzo surfaces of degree $d=1$ and $d\geq 5$ satisfy the Hasse principle (see, e.g., \cite{va13}). Some partial results for $d=3$ (see Example \ref{deg3ex}) and $d=4$ (see \cite{wbthesis} conditional on Schinzel's hypothesis and finiteness of Tate-Shafarevich groups) are known. In \cite[Question 4.5]{corndp2}, Corn asked whether there are any Brauer--Manin obstruction to rational points when $d=2$ and $\Br(X)/\Br(k)$ is 3 torsion. Moreover he asked if $X$ satisfies the Hasse principle in this case. If the conjecture is true, a negative answer to the first question would imply a positive answer to the second question.

Let $\calC$ be the class of del Pezzo surfaces of degree 2 over a number field $k$. A consequence of Theorem \ref{main} is that Question \ref{q2} has a positive answer with $n=2$. In other words, the 3-torsion part of the Brauer group will not obstruct the Hasse principle, giving a positive answer to one of Corn's questions. More concretely, in \S\ref{app} we prove the following result.

\begin{cor}\label{dP2} Let $X$ be a degree 2 del Pezzo surface over a number field $k$. Suppose that $\Br(X)/\Br_0(X)$ has exponent 3. Then $X(\A)\neq\emptyset\implies X(\A)^{\Br}\neq\emptyset$.
\end{cor}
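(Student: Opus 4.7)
The plan is to apply Theorem \ref{main} to a genus 1 fibration birational to $X$, and then use the exponent 3 hypothesis to eliminate the remaining 2-primary obstructions. For the first step, decompose $\Br(X) = \Br(X)[2^\infty] \oplus \Br(X)[2^\perp]$. The hypothesis that $\Br(X)/\Br_0(X)$ has exponent 3 forces the 2-primary component of the quotient to vanish, so $\Br(X)[2^\infty] \subseteq \Br_0(X)$. Since constant classes do not contribute to the Brauer--Manin pairing, $X(\A)^{\Br(X)} = X(\A)^{\Br(X)[2^\perp]}$, and it suffices to prove $X(\A)^{\Br(X)[2^\perp]} \neq \emptyset$ given $X(\A) \neq \emptyset$.

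To produce the fibration, recall that the anticanonical morphism realizes $X$ as a double cover $\phi\colon X \to \P^2_k$ branched along a smooth plane quartic $Q$. Choose a $k$-rational point $P \in \P^2(k) \setminus Q(k)$ (which exists since $Q$ is a proper closed subvariety of $\P^2$), and let $\mathcal{P}$ be the pencil of lines in $\P^2$ through $P$. The pullback $\phi^{*}\mathcal{P}$ is a pencil in $|-K_X|$ with base locus $\phi^{-1}(P)$, a length-$2$ closed subscheme of $X$ defined over $k$. Blowing up $\phi^{-1}(P)$ produces a smooth projective surface $\tilde{X}$ and a morphism $\pi\colon \tilde{X} \to \P^1$ whose generic fiber $C$ is a smooth projective genus $1$ curve (isomorphic to a generic member of $|-K_X|$ via the adjunction formula, as $C \sim -K_X$). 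Since $-K_X$ restricts to a degree $(-K_X)^2 = 2$ divisor on any such member, $C$ carries a degree $2$ zero-cycle, so $\mathrm{index}(C) \mid 2$.

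Since $\P^1$ satisfies weak approximation and smooth projective genus 1 curves satisfy (ZC), Theorem \ref{main} (with $B = 0$ and $d = \mathrm{index}(C)$) yields $\tilde{X}(\A) \neq \emptyset \implies \tilde{X}(\A)^{\Br(\tilde{X})[d^\perp]} \neq \emptyset$, and $\Br(\tilde{X})[2^\perp] \subseteq \Br(\tilde{X})[d^\perp]$ because $d \mid 2$. To descend to $X$, note that $\tilde{X} \to X$ is a blow-up of smooth projective surfaces along a 0-dimensional subscheme, hence induces an isomorphism $\Br(X) \isomto \Br(\tilde{X})$ and an equivalence $\tilde{X}(\A) \neq \emptyset \iff X(\A) \neq \emptyset$. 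By functoriality of the Brauer--Manin pairing, any $\Br(\tilde{X})[2^\perp]$-orthogonal adelic point on $\tilde{X}$ pushes forward to a $\Br(X)[2^\perp]$-orthogonal adelic point on $X$; combined with the first reduction, this yields $X(\A) \neq \emptyset \implies X(\A)^{\Br(X)} \neq \emptyset$. The main delicacy is verifying the adelic comparison and the functoriality through the blow-up, particularly when $\phi^{-1}(P)$ is a single degree-$2$ closed point, in which case the exceptional divisor is a twist of $\P^1_k$ over a quadratic extension of $k$.
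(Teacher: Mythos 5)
Your proposal is correct and follows essentially the same route as the paper: both construct a genus $1$ fibration by blowing up the base locus of the anticanonical pencil pulled back from lines through a point of $\P^2$ (the paper writes this in weighted-projective coordinates as $[w:x:y:z]\mapsto[y:z]$), note the generic fiber has a degree $2$ zero-cycle and satisfies (ZC), apply Theorem \ref{main}, and descend through the blow-up using the exponent $3$ hypothesis to identify $\Br(X)[2^\perp]$ with all of $\Br(X)$ modulo constants. The only cosmetic difference is that you perform the reduction $X(\A)^{\Br}=X(\A)^{\Br(X)[2^\perp]}$ at the outset rather than at the end.
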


\begin{rmk}
There exist minimal del Pezzo surfaces over $k$ with $\Br(X)/\Br_0(X)\isom \Z/3\Z$. Hence not every surface occurring in Corollary \ref{dP2} arises as a blow-up of a cubic surface at a rational point. See Example \ref{example}, which was communicated to us by Elsenhans.
\end{rmk}

Let $\calC$ be the class of all smooth diagonal quartics in $\P^3$ defined by
\begin{equation}\label{diag}
ax^4+by^4+cz^4+dw^4=0,
\end{equation}
where $a,b,c,d\in k^\times$ and $abcd\in k^{\times2}$ for some number field $k$. Then Question \ref{q2} has a positive answer with $n=2$. This extends a result of Ieronymou and Skorobogatov in \cite{is2015} to any number field, but under the condition that $abcd\in k^{\times2}$.

\begin{cor}\label{corodd} Let $X$ be a smooth diagonal quartic \eqref{diag} in $\P_k^3$, with $abcd \in k^{\times 2}$. If $B\subset \Br(X)$ is a subgroup such that $X(\A)^B\neq\emptyset$, then $X(\A)^{B+\Br(X)[2^\perp]}\neq\emptyset$.
\end{cor}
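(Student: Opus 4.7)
The plan is to apply Theorem \ref{main} with $Z = \P^1_k$. Since $\P^1$ satisfies weak approximation over any number field and smooth genus 1 curves satisfy (ZC), it suffices to exhibit a fibration $\pi : X \to \P^1$ whose generic fiber $X_\eta$ over $k(t)$ is a smooth curve of genus 1 with index a power of 2: in that case $\Br(X)[d^\perp] \supseteq \Br(X)[2^\perp]$, so Theorem \ref{main} will give the required conclusion.

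Writing $abcd = e^2$ with $e \in k^\times$, the quadratic extensions $k(\sqrt{-ab})$ and $k(\sqrt{-cd})$ coincide, because their distinguished square roots satisfy $\sqrt{-ab}\cdot\sqrt{-cd} = \pm e \in k$. Over this common extension a suitable Galois orbit of $(-2)$-curves on $X_{\kbar}$ (lines in the Fermat case, and their analogues for general coefficients) sums to a Galois-stable divisor class $D$ with $D^2 = 0$. Since $D$ is $\Gal(\kbar/k)$-stable it descends to a $k$-rational class, and the linear system $|D|$ defines the desired genus 1 fibration $\pi : X \to \P^1$. To control the index of $X_\eta$, I would exhibit an effective zero-cycle of degree 2 directly: pulling back an appropriate $k(t)$-rational divisor from $\P^1$ and using the square condition to arrange its support as Galois-conjugate pairs over a quadratic extension of $k(t)$ yields such a cycle, showing that the index divides 2.

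The main obstacle lies in the fibration construction: one must check that $abcd \in k^{\times 2}$ is precisely the condition for a genus 1 pencil on $X_{\kbar}$ to descend to $k$, and verify that the generic fiber is smooth and geometrically integral of genus 1 over $k(t)$. Once these are in hand, applying Theorem \ref{main} to the subgroup $B$ yields $X(\A)^{B + \Br(X)[d^\perp]} \neq \emptyset$, and since $\Br(X)[d^\perp] \supseteq \Br(X)[2^\perp]$ whenever $d$ is a power of 2, we conclude $X(\A)^{B + \Br(X)[2^\perp]} \neq \emptyset$, as required.
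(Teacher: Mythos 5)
Your overall strategy is the paper's: produce a genus 1 fibration $X\to\P^1$ over $k$, invoke Example \ref{gen1} for property (ZC), and apply Theorem \ref{main} with $d$ a power of $2$. But the step you yourself flag as ``the main obstacle'' --- getting the pencil over $k$ with base $\P^1$ --- is exactly where the content of the corollary lies, and your sketch has a gap there. A Galois-stable class $D\in\Pic(X_{\kbar})^{\Gal}$ with $D^2=0$ does not by itself give a morphism to $\P^1_k$: the class need not descend to $\Pic(X)$ (the obstruction lives in $\Br(k)$), and even once the pencil is defined over $k$ its base is a priori a conic, which is $\P^1$ only if it has a $k$-point. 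Both issues are resolved only by using the hypothesis $X(\A)^B\neq\emptyset$, which you never invoke. The paper handles this concretely via the quadric $Q\colon ar^2+bs^2+cu^2+dv^2=0$ and the degree $8$ map $\rho\colon X\to Q$, $[x:y:z:w]\mapsto[x^2:y^2:z^2:w^2]$: the condition $abcd\in k^{\times 2}$ makes each of the two rulings of $Q$ individually Galois-stable, and $X(\A)\neq\emptyset$ forces $Q(\A)\neq\emptyset$, hence $Q(k)\neq\emptyset$ by Hasse--Minkowski, so the ruling is an honest morphism $Q\to\P^1$ over $k$ and $\pi\circ\rho$ is the desired genus 1 fibration. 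You need to build the adelic hypothesis into your construction; without it the base of your pencil could be a pointless conic and Theorem \ref{main} (which requires the base to satisfy weak approximation) would not apply.

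Second, your claim that the generic fiber has index dividing $2$ is both unjustified and unnecessary. The phrase ``arrange its support as Galois-conjugate pairs over a quadratic extension of $k(t)$'' does not produce a degree $2$ zero-cycle as stated. The paper instead pulls back a section $\P^1\to Q$ of the ruling through the degree $8$ map $\rho$, obtaining a zero-cycle of degree $8$ on the generic fiber (restricting the hyperplane class would give degree $4$); since one only needs $\Br(X)[d^\perp]\supseteq\Br(X)[2^\perp]$, any power of $2$ suffices. With the fibration construction repaired as above and the index claim weakened accordingly, the rest of your argument goes through.
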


Given an abelian variety $A$ of dimesion $\geq2$ and a 2-covering of $A$, one can construct a Kummer variety attached to this 2-covering. Let $\calC$ be the class of all such Kummer varieites over a number field $k$. It was proven by Skorobogatov and Zarhin that Question \ref{q} has positive answer for $\calC$ with $n=2$ \cite{sz2016}, and later Skorobogatov extended the result to answer Question \ref{q2} in \cite[Theorem A.1]{cv17}. They're approach relied on proving results about $\Br(X)$ and its odd torsion part. We use the proof of Theorem \ref{main2} to give a more direct proof of this, but at the cost of not giving any information about $\Br(X)$ itself.

\begin{cor}\label{kum} Let $A$ be an abelian variety defined over a number field $k$. Let $X$ be the Kummer variety attached to a 2-covering of $A$ (see \S\ref{constkum}). If $B\subset \Br(X)$ is a subgroup such that $X(\A)^B\neq\emptyset$, then $X(\A)^{B+\Br(X)[2^\perp]}\neq\emptyset$.
\end{cor}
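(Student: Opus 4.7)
The plan is to re-run the argument behind Theorem \ref{main2}, replacing the fibration $\pi\colon X\to Z$ with the $2$-covering $f\colon Y\to A$ that defines the Kummer variety $X$. The key observation is that $Y$ is itself a $k$-torsor under $A$ with $[Y]\in \H^1(k,A)$ of order dividing $2$, so the $Z=\Spec k$ case of Theorem \ref{main2}, due to Creutz--Viray \cite{cv17}, already handles $Y$ itself. What remains is to transfer information between $Y$ and $X$ across the quotient $Y\to Y/\{\pm 1\}$ and its resolution $X\to Y/\{\pm 1\}$.

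First I would carry out a descent step. Given $(P_v)\in X(\A)^B$, I would move each $P_v$ slightly so it lies in the image of the smooth locus under the natural correspondence, and then lift it to a point $Q_v$ on some local twist $Y^{\tau_v}$ of $Y$ by a class $\tau_v\in \H^1(k_v, A[2])$. The central technical issue is to patch the local classes $\tau_v$ to a global class $\tau\in \H^1(k,A[2])$ in a way compatible with orthogonality to $B$; the twist $Y^\tau$ remains a $2$-covering, so $[Y^\tau]\in \H^1(k,A^\tau)$ still has order dividing $2$.

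Next, I would pull back $B$ along the natural correspondence between $Y^\tau$ and $X$ to a subgroup $B'\subseteq \Br(Y^\tau)$; by functoriality of the Brauer--Manin pairing, $(Q_v)\in Y^\tau(\A)^{B'}$. Applying Theorem \ref{main2} to $Y^\tau$ then produces $(Q'_v)\in Y^\tau(\A)^{B'+\Br(Y^\tau)[2^\perp]}$. The image $(P'_v)\in X(\A)$ of $(Q'_v)$ lies in $X(\A)^{B+\Br(X)[2^\perp]}$, because the pullback of any $\alpha\in \Br(X)[2^\perp]$ is still prime-to-$2$ torsion in $\Br(Y^\tau)$ and therefore pairs trivially with $(Q'_v)$, and pullback commutes with the Brauer--Manin pairing.

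The hard part will be the descent step. One must simultaneously produce a global twist class $\tau\in \H^1(k,A[2])$ admitting an adelic lift of $(P_v)$ \emph{and} preserve orthogonality to the prescribed subgroup $B$. In Theorem \ref{main2} this synchronization is achieved through weak approximation on the base $Z$ together with a careful choice of a rational point $P_0\in Z_0(k)$; here there is no such base, so the synchronization must be carried out by a Galois-cohomological descent argument along $f\colon Y\to A$, using that the fibers of $f$ are $A[2]$-torsors. This is the substantive content one must extract from the proof of Theorem \ref{main2} and adapt to the present setting.
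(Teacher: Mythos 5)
Your proposal has the right skeleton---lift the adelic point to a twist of the covering, invoke the Creutz--Viray result for torsors under abelian varieties, and push Brauer classes back and forth along the quotient map---and this is indeed the strategy of the paper. But the two steps you flag as ``hard'' or defer are exactly where the content lies, and as set up they do not go through. First, the twisting group is misidentified. The covering of $X$ relevant for descent is the double cover $Y'\to X=Y'/\sigma$, whose deck group is $\Z/2\Z$; its twists are the quadratic twists $Y'_\alpha$, parametrized by $\alpha\in \H^1(k,\mu_2)=k^\times/k^{\times2}$, and every such twist still maps to the \emph{same} $X$ (because $\sigma$ descends to the identity on the quotient). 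By contrast, the twist of $Y$ by a general class $\tau\in \H^1(k,A[2])$ is another $2$-covering of $A$ whose associated Kummer variety is in general a different $k$-form; there is no morphism or correspondence from $Y^\tau$ to $X$, so the pullback subgroup $B'$ and the push-forward of $(Q'_v)$ that your argument relies on do not exist. Once the parameter space is corrected to $k^\times/k^{\times2}$, the patching step you single out as the central difficulty becomes immediate: weak approximation in $k^\times$ produces a global $\alpha$ matching the local classes at the finitely many places of a suitable set $S$, which is exactly what the paper does.

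Second, you cannot conclude by ``applying Theorem \ref{main2} to $Y^\tau$'' (i.e.\ the $Z=\Spec k$ case), because that theorem requires $Y^\tau(\A)^{B'}\neq\emptyset$, hence in particular local points on the twist at \emph{every} place. The global twist is only arranged to match the local twists at the places of $S$; outside $S$ it need not have any $k_v$-points at all. The paper circumvents this with Lemma \ref{abvar}, a variant of \cite[Corollary 4.3]{cv17} formulated purely in terms of the places in $S$: it applies the endomorphism of \cite[Lemma 4.6]{cv17} to the local points at $v\in S$, and controls the places outside $S$ using the zero-cycle $\psi^{-1}(0)$ of degree $d^{2g}$, which exists on $Y_{k_v}$ whether or not $Y$ has $k_v$-points there. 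Since this lemma is precisely the ``substantive content'' your proposal leaves to be extracted, and since the descent step is carried out with the wrong torsor structure, the argument as written does not yet prove the corollary.
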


\section*{Acknowledgements}

I thank my advisor Anthony V\'{a}rilly-Alvarado for his continuous support and help. I thank Sho Tanimoto, Bianca Viray, and Olivier Wittenberg for valuable discussions and feedback. I thank Andreas-Stephan Elsenhans for providing the equation of the del Pezzo surface in Example \ref{example}. I thank Jean-Louis Colliot-Th\'el\`ene for his suggestions to a preliminary version of the paper, which helped improve the proof and scope of the original results. I thank Alexei Skorobogatov for his suggestion of using \cite[Lemma 4.6]{cv17} to prove Theorem \ref{main2}.

\section{Proof of Theorem \ref{main}}\label{sec:mainth}

Denote by $\Omega$ the set of places of $k$, and by $k_v$ the completion of $k$ at $v$ for any $v\in \Omega$. Given any field extension $K/k$, there is a pairing
\begin{equation*}
\begin{array}{c @{{}\ {}} c @{{}\ {}} c}
X(K)\times \Br(X) &\longrightarrow  &\Br(K),\\\
(P\ ,\ \calA) &\longmapsto &\ \calA (P):=\iota^*\calA.
\end{array}
\end{equation*}
where $i\colon P\to X$ is the inclusion. This leads to the definition of $X(\A)^H$,
$$X(\A)^H=\bigg\{\{P_v\}\in X(\A)\mid\sum_{v\in\Omega}\inv_v\calA (P_v)=0\ \forall\calA\in H\bigg\},$$
where $\inv_v\colon \Br(k_v)\to\Q/\Z$ is the invariant map from local class field theory. In \cite{ct95}, Colliot-Th\'el\`ene extended the above pairing to the group of zero-cycles on $X$. Moreover the pairing respects rational equivalence, thus inducing a pairing on $\CH_0(X_K)$ defined as follows
\begin{equation*}
\begin{array}{c @{{}\ {}} c @{{}\ {}} c}
\CH_0(X_K)\times \Br(X) &\longrightarrow  &\Br(K),\\\
\bigg(\displaystyle\sum_i n_i (P_i)\ ,\ \calA\bigg) &\longmapsto &\displaystyle\sum_in_i\cores_{K(P_i)/K}\calA (P_i).
\end{array}
\end{equation*}
Similarly, by class field theory, if $M\in\CH_0(X)$, its image $\{M_v\}\in \prod_{v\in\Omega}\CH_0(X_{k_v})$ then satisfies $\sum_v\inv_v \calA(M_v)=0$.

\begin{proof}[Proof of Theorem \ref{main}] By, e.g., \cite[Lemma 4.8]{cv17}, it suffices to show that $X(\A)^B\neq\emptyset\implies X(\A)^H\neq\emptyset$ for any finite subgroup $H\subset B+\Br(X)[d^\perp]$. Let $H \subset B+\Br(X)[d^\perp]$ be a finite subgroup. Then there is a finite set $S\subset \Omega$ of places in $k$ such that $X$ can be spread out to a scheme $\calX$ over $\Spec \calO_{k,S}$, where $\calO_{k,S}$ denotes the ring of $S$-integers, and any $\calA\in H$ extends to an element of $\Br(\calX)$ (see \cite[\S5]{skorobogatovtors}). As a result, the evaluation of any point $P_v\in X(k_v)$ will lie in $\Br(\calO_v)$, which is trivial; here $\calO_v$ denotes the ring of integers of $k_v$. Since the existence of a model over $\Spec \calO_{k,S}$ is stable under a finite field extension, the map $\calA(-)\colon X(L_w)\to\Br(L_w)$ is also zero for any finite extension $L_w/k_v$. Hence it follows from the definition that the map $\calA(-)\colon \CH_0(X_{k_v})\to \Br(k_v)$ is also zero.

Let $\sum_i n_i(P_i)$ be a zero-cycle of degree $d$ on the generic fiber. Then each point $P_i$, say of degree $d_i$, gives rise to an irreducible subvariety $E_i\subset X$ such that $E_i\to Z$ is finite surjective of degree $d_i$. By shrinking $Z_0$ if necessary, we can assume $(E_i)_{Z_0}:=E_i\times_Z {Z_0}\to Z_0$ is finite \'etale of degree $d_i$ for each $i$. Since each point in $X(k_v)$ has an analytic neighborhood over $k_v$ for any place $v$, we have $X_{Z_0}(k_v)\neq\emptyset$, so $X_{Z_0}(\A)\neq\emptyset$ by properness of the fibers.

Let $\{P_v\}\in X(\A)^B$. By left-continuity of the Brauer--Manin pairing, we may deform $P_v$ so that $\{P_v\}\in X_{Z_0}(\A)\cap X(\A)^B$. By the implicit function theorem, we can find a $v$-adic open set $U_v\subset Z_0$, containing $\pi(P_v)$, where there exists a local section $\rho_v\colon U_v\to X$, i.e., $\pi\circ\rho_v=\id$ and $\rho_v(\pi(P_v))=P_v$. By weak approximation, we can find a rational point $Q\in Z_0(k)$ such that for each $v\in S$, $Q$ is $v$-adically close enough to $\pi(P_v)$ so that $Q\in U_v$ and $\calB(\rho_v(Q))=\calB(P_v)$ for any $\calB\in H$.

Let $M_i\in \CH_0(X)$ be the zero-cycle of degree $d_i$ corresponding to the subscheme $E_i\times_{Z} Q\subset X$. By construction, we can also consider $M_i\in \CH_0(F)$ where $F:=\pi^{-1}(Q)$ is the fiber above $Q$. Let $M=\sum_i n_iM_i$, which is a zero-cycle of degree $d$, and denote its image in $\CH_0(F_{k_v})$ by $M_v$ for any $v\in\Omega$.

Define the point $\{R_v\}\in X(\A)$ as follows. For each $v\notin S$, choose $R_v$ to be any point in $X(k_v)$. For each $v\in S$, first set $O_v:=\rho_v(Q)\in F(k_v)$. Let $n\in \Z$ be such that $nd\equiv 1\mod {|H|}$ and define the zero-cycle
$$D:=nM_v-(nd-1)(O_v).$$
By the assumption that $F$ satisfies (ZC), there is a point in $R_v\in F(k_v)$ such that $(R_v)=D$ in $\CH_0(F_{k_v})$.

We now show that $\{R_v\}\in X(\A)^H$. Fix some $\calA\in H\cap B$ or $\calA\in H[d^\perp]$. We have
$$\sum_{v\in\Omega}\inv_v\calA (M_v)=0$$
since $M_v$ is the image of the $k$-rational zero-cycle $M$. Recall that by assumption on $S$, the map $\calA(-)\colon \CH_0(X_{k_v}) \to\Br(k_v)$ is zero for $v\notin S$. Hence
\begin{align*}
\sum_{v\in \Omega} \inv_v\calA (R_v)&=\sum_{v\in S} \inv_v\calA (R_v)\\
&=\sum_{v\in S} n\inv_v\calA (M_v)+\sum_{v\in S} (1-nd)\inv_v\calA (O_v)\\
&=\sum_{v\in \Omega} n\inv_v\calA (M_v)+\sum_{v\in \Omega} (1-nd)\inv_v\calA (P_v)\\
&=0
\end{align*}
where the last equality follows from $1-nd\equiv 0\mod |H[d^\perp]|$ if $\calA\in H[d^\perp]$ and from $\{P_v\}\in X(\A)^B$ if $\calA\in B\cap H$. Hence $\{R_v\}\in X(\A)^H$.
\end{proof}

\section{Proof of Theorem \ref{main2}}
The proof follows similar strategy as \S\ref{sec:mainth}; the key ingredient is \cite[Lemma 4.6]{cv17} whose use was suggested to us by Skorobogatov. We use this to prove a slight variant of \cite[Corollary 4.3]{cv17}.

\begin{lem}\label{abvar}
Let $A$ be an abelian variety over a number field $k$. Let $\psi\colon Y\to A$ be a $d$-covering. Let $H\subset \Br(Y)$ be a finite subgroup. Suppose $S\subset\Omega$ is a finite set of places such that the group $\CH_0(Y_{k_v})$ pairs trivially with $H$ for all $v\notin S$.

If there exists $\{R_v\}\in \prod_{v\in S} Y(k_v)$ such that
$$\sum_{v\in S} \inv_v \calA(R_v)=0$$
for all $\calA\in H[d^\infty]$, then there exists $\{P_v\}\in \prod_{v\in S}Y(k_v)$ such that
$$\sum_{v\in S} \inv_v \calA(P_v)=0.$$
for all $\calA\in H$.
\end{lem}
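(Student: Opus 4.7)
The plan is to reduce to \cite[Lemma 4.6]{cv17} via a zero-cycle construction analogous to the one in the proof of Theorem \ref{main}. Since $H$ is finite and torsion, decompose $H = H[d^\infty]\oplus H[d^\perp]$; the hypothesis already gives the vanishing $\sum_{v\in S}\inv_v\calA(R_v)=0$ on the first summand, so the task is to arrange the vanishing on $H[d^\perp]$ while preserving the given property on $H[d^\infty]$.

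First I would build, for each $v\in S$, a local zero-cycle $D_v$ of degree $1$ on $Y_{k_v}$ with the property $\sum_{v\in S}\inv_v\calA(D_v)=0$ for all $\calA\in H[d^\perp]$. Because $\psi\colon Y\to A$ is a $d$-covering, $[Y]\in \H^1(k,A)$ has order $d$, and $Y$ has index $d$, so $Y$ supports a global zero-cycle $M$ of degree $d$; let $M_v\in \CH_0(Y_{k_v})$ denote its image. Pick $n\in\Z$ with $nd\equiv 1\pmod{|H[d^\perp]|}$ and set
$$D_v := nM_v + (1-nd)(R_v)\in \CH_0(Y_{k_v}),$$
which has degree $nd + (1-nd) = 1$. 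For $\calA\in H[d^\perp]$ one computes
$$\sum_{v\in S}\inv_v\calA(D_v)=n\sum_{v\in S}\inv_v\calA(M_v)+(1-nd)\sum_{v\in S}\inv_v\calA(R_v)=0,$$
because reciprocity applied to $\calA(M)\in\Br(k)$ (together with the triviality hypothesis on $v\notin S$) gives the first summand, while $(1-nd)\calA=0$ in $\Br(Y)$ by the congruence condition on $n$ kills the second.

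Next I would apply \cite[Lemma 4.6]{cv17} to the $d$-covering $\psi\colon Y\to A$, the finite group $H$, and the data $\{R_v\}$, $\{D_v\}$. The output is a tuple $\{P_v\}\in \prod_{v\in S}Y(k_v)$ whose Brauer evaluations match those of $R_v$ on $H[d^\infty]$ and those of $D_v$ on $H[d^\perp]$, up to terms summing to zero over $S$. The underlying mechanism is to translate each $R_v$ by a suitable $t_v\in A(k_v)$ in the torsor action: for an $A$-torsor whose class has order $d$, translation modifies prime-to-$d$ and $d$-primary Brauer evaluations in controlled, essentially independent ways, and the zero-cycle $D_v$ dictates which translation to apply. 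Summing over $v\in S$ then gives $\sum_{v\in S}\inv_v\calA(P_v)=0$ on both summands of $H=H[d^\infty]\oplus H[d^\perp]$: on the $d$-primary part by the hypothesis on $\{R_v\}$, and on the prime-to-$d$ part by the computation above.

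The main obstacle is the invocation of \cite[Lemma 4.6]{cv17}: one must check that the translations produced by that lemma simultaneously realize the required matching on both summands of $H$, and that this is compatible with the degree-$1$ zero-cycles $D_v$ constructed above. This is precisely what the lemma in op.\ cit.\ is designed to handle in the setting of $d$-coverings of abelian varieties, so once it is cited the remaining arithmetic reduces to routine zero-cycle bookkeeping.
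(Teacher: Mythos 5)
There is a genuine gap, and it sits exactly where you flag ``the main obstacle.'' Your plan is: build local zero-cycles $D_v$ of degree $1$ that pair correctly with $H[d^\perp]$, then invoke \cite[Lemma 4.6]{cv17} to produce actual points $P_v\in Y(k_v)$ realizing the evaluations of $D_v$ on $H[d^\perp]$ while matching $R_v$ on $H[d^\infty]$. But that is not what Lemma 4.6 of op.\ cit.\ provides. Converting a degree-$1$ zero-cycle into a rational point with the same Brauer evaluations is precisely the property (ZC), which the paper explicitly notes \emph{fails} for torsors under abelian varieties of dimension $\geq 2$ (Example \ref{gen1}); the whole reason Lemma \ref{abvar} exists, rather than just re-running the proof of Theorem \ref{main}, is that this step is unavailable. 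Your description of the mechanism as ``translation by $t_v\in A(k_v)$'' adjusting the two torsion parts independently is a guess that does not match the cited lemma, and no argument is given that such translations exist or achieve the required matching.

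What \cite[Lemma 4.6, A.2]{cv17} actually gives is a \emph{self-map} $\rho\colon Y\to Y$ (a lift of a suitable multiplication map on $A$) such that $\rho^*$ is the identity on $H[d^\infty]$ and sends $H[d^\perp]$ into the constant classes $\Br(k)$. The proof then simply sets $P_v:=\rho(R_v)$: the $H[d^\infty]$ sum is unchanged by hypothesis, and for $\calA\in H[d^\perp]$ one writes $\alpha=\rho^*\calA\in\Br(k)$, shows $\alpha_v=0$ for $v\notin S$ using the triviality hypothesis together with the degree-$d^{2g}$ zero-cycle $\psi^{-1}(0)$ on $Y_{k_v}$ (note: the index of $Y$ divides $d^{2g}$, not $d$ as you assert --- for a $d$-covering one only knows the period divides $d$), and concludes by global reciprocity that $\sum_{v\in S}\inv_v\alpha_v=\sum_{v\in\Omega}\inv_v\alpha_v=0$. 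Your zero-cycles $D_v$ play no role in the correct argument and cannot be converted into the points you need; the proposal as written does not close.
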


\begin{proof}
By \cite[Lemma 4.6, A.2]{cv17}, there is a map $\rho\colon Y\to Y$ such that
$$\rho^*\colon \Br(Y)\to\Br(Y)$$
is the identity on $H[d^\infty]$ and $\rho^*H[d^\perp]\subset\Br_0(Y)=\Br(k)$. Let $P_v=\rho(R_v)$. For any $\calA\in H[d^\infty]$,
$$\sum_{v\in S}\calA(P_v)=\sum_{v\in S}\rho^*\calA(R_v)=\sum_{v\in S}\calA(R_v)=0.$$
Now let $\calA\in H[d^\perp]$ and $\alpha=\rho^*\calA\in \Br(k)$. For any $v\notin S$ and $M\in \CH_0(Y_{k_v})$, we have $\alpha(M)=\calA(\rho_*M)=0$ by assumption. On the other hand, $\alpha(M)=\deg(M)\alpha_v$ where $\alpha_v$ is the image of $\alpha$ under $\Br(k)\to \Br(k_v)$. Since $Y_{k_v}$ has a zero-cycle of degree $d^{2g}$, namely $\psi^{-1}(0)$, it follows $\alpha_v\in \Br(k_v)[d^{2g}]$ which shows $\alpha_v=0$. Hence
$$\sum_{v\in S}\calA(P_v)=\sum_{v\in S}\alpha_v=\sum_{v\in\Omega}\alpha_v=0.$$
Thus $\{P_v\}$ is orthogonal to $H[d^\infty]+H[d^\perp]=H$.
\end{proof}

\begin{proof}[Proof of Theorem \ref{main2}] Let $H\subset B+\Br(X)[d^\perp]$ be any finite subgroup. We must show that $X(\A)^H\neq\emptyset$. Let $S\subset\Omega$ be a finite set of places such that $H$ pairs trivially with $\CH_0(X_{k_v})$ for all $v\notin S$ (see \S\ref{sec:mainth}). There is a Zariski open set $U\subset Z$ such that $A$ has a model $\calA$ over $U$ and for each point $P\in U$, the fiber $X_P$ is a $k(P)$-torsor under the abelian variety $\calA_P$. Let $\{T_v\}\in X(\A)^B$. As in \S\ref{sec:mainth}, we can find a point $Q\in U(k)$ that is $v$-adically close to $\pi(T_v)$ so that $F:=\pi^{-1}(Q)$ has a $k_v$-point $R_v$ with $\calA(R_v)=\calA(T_v)$ for each $v\in S$ and $\calA\in H$. In particular,
$$\sum_{v\in S} \inv_v\calA(R_v)=\sum_{v\in S} \inv_v\calA(T_v)=\sum_{v\in \Omega} \inv_v\calA(T_v)=0$$
for any $\calA\in H[d^\infty]\subset B$. An application of Lemma \ref{abvar} then gives a point $\{P_v\}\in\prod_{v\in S} F(k_v)$ such that
$$\sum_{v\in S} \inv_v\calA(P_v)=0$$
for all $\calA\in H$. Setting $P_v$ to be any point on $X(k_v)$ for $v\notin S$, we conclude $\{P_v\}\in X(\A)^H$.\end{proof}

\section{Applications}\label{app}

Before we prove the corollaries listed in $\S1$, we first give examples of varieties satisfying property (ZC).

\begin{example}\label{propp} Any $k$-rational variety satisfies property (ZC). In fact, any class of varieties that become $K$-rational once they have a $K$-point satisfies (ZC) such as
\begin{enumerate}
\item {\it Quadrics.} In this case we always have a zero-cycle of degree 2. Hence the theorem implies $X(\A)\neq\emptyset\implies X(\A)^{\Br[2^\perp]}\neq\emptyset$ when the fibers are quadrics.
\item {\it Severi--Brauer varieties.} In this case one can also replace $d$ in Theorem \ref{main} by the period of the generic fiber, since index and period have the same prime divisors.
\item {\it Del Pezzo surfaces of degree $d\geq5$.}
\end{enumerate}
\end{example}

\begin{example}\label{ppp3}
Let $X$ be a del Pezzo surface of degree 4. If $X(k)\neq\emptyset$, then there exists a point $P\in X(k)$ not lying on any of the lines on $X$. One can blow up at $P$ to get a cubic surface $\widetilde{X}$ containing a line $L$ defined over $k$. The projection from $L$ defines a rational map $\widetilde{X}\dashrightarrow \P^1$ which can be extended to a morphism where the fibers are conics. Since there are exactly 10 lines on $\widetilde{X}$ intersecting $L$, there are 5 degenerate fibers. By \cite{ctc79} $\widetilde{X}$ satisfies (ZC), which implies $X$ satisfies (ZC) by birational invariance of $CH_0(X)$ among smooth projective varieties. Since $X$ can be expressed as intersection of two quadrics in $\P^4$, its index divides 4.
\end{example}

\begin{example}\label{gen1}
Let $X$ be a smooth curve of genus 1. Then $X$ satsifies (ZC) by the Riemann-Roch theorem for curves. This is not true for higher dimensional abelian varieties, but one can apply Theorem \ref{main2} instead in this case.
\end{example}

\begin{rmk}By taking $Z=\Spec k$ in Theorem \ref{main}, we get that any of the varieties $X$ listed above satisfy $X(\A)\neq\emptyset\implies X(\A)^{\Br(X)[d^\perp]}\neq\emptyset$ where $d$ is the index of $X$. If $X$ is a smooth projective curve of genus 1, then one can take $d$ to be the order of $[X]\in \H^1(k,J)$ where $J$ is the Jacobian of $X$.
\end{rmk}

\subsection{Del Pezzo Surfaces}

Let $X$ be a del Pezzo surface of degree $2$ over a number field $k$. See \cite[Theorem 4.1]{corndp2} for a complete list of isomorphism classes of $\Br(X)/\Br_0(X)$. In particular, $\Br(X)/\Br_0(X)$ has exponent either 2, 3, or 4, and if it has exponent 3, then it is isomorphic to either $\Z/3\Z$ or $(\Z/3\Z)^2$.

\begin{proof}[Proof of Corollary \ref{dP2}]
We can express $X$ as a smooth hypersurface of degree $4$ inside the weighted projective space $\Proj k[w,x,y,z]=\P(2,1,1,1)$. Since $\Char k\neq2$, $X$ has an equation of the form
$$w^2=f(x,y,z),$$
where $f(x,y,z)$ is a homogeneous polynomial of degree $4$. Suppose $\Br(X)/\Br(k)\isom \Z/3\Z$ or $(\Z/3\Z)^2$. Let $\pi\colon X\dashrightarrow \P^1$ be the rational map given by $[w:x:y:z]\mapsto[y:z]$. The locus of indeterminancy is given by $Z:=\{[w:x:y:z]\in X \mid y=z=0\}$. Let $\widetilde{X}$ be the blow up of $X$ along $Z$ giving the following diagram
$$
\xymatrixcolsep{2pc}\
\xymatrix{
\widetilde{X} \ar[dr]^{\widetilde{\pi}} \ar[d]^\beta \\\
X\ar@{.>}[r]^\pi & \mathbb{P}^1}
$$
Then $\widetilde{\pi}$ is a genus 1 fibration, i.e., almost all fibers are smooth projective curves of genus 1. The generic fiber clearly has points of degree $2$ over $k(\P^1)$. Hence by Example \ref{gen1} and Theorem \ref{main}, the subgroup $\Br(\widetilde{X})[2^\perp]$ does not obstruct the Hasse principle. Since $\beta^*\Br(X)[2^\perp]\subseteq \Br(\widetilde{X})[2^\perp]$, if $\{P_v\}\in \widetilde{X}(\A)^{\Br(\widetilde{X})[2^\perp]}$, then $\beta(\{P_v\})\in X(\A)^{\Br(X)[2^\perp]}$. The statement of the corollary follows from the equality $(\Br(X)/\Br_0(X))[2^\perp]=\Br(X)/\Br_0(X)$.
\end{proof}

\begin{rmk} As genus one curves are torsors under their Jacobian, one can also use Theorem \ref{main2} to prove the above result.
\end{rmk}

\begin{example}\label{example}
For any cubic surface $S\subset\P^3$ with 3 torsion in $\Br(S)/\Br_0(S)$, one can blow up a point on $S$ to obtain a del Pezzo surface $X$ of degree 2 with 3 torsion in $\Br(X)/\Br_0(X)$. However, such an $X$ will always have a rational point. An example of a del Pezzo surface over $\Q$ with 3 torsion in Brauer group which does not arise as a blow up of a cubic surface is the following (provided to us by Elsenhans)
\begin{align*}
w^2= &\ 8x^4 + 16x^3y - 16x^3z + 21x^2y^2 - 30x^2yz + 21x^2z^2 + 14xy^3 - 24xy^2z +\\&\ 18xyz^2 - 
    16xz^3 + 7y^4 - 22y^3z + 9y^2z^2 + 4yz^3 + 2z^4.
\end{align*}
Indeed, the Galois action on $\Pic(\Xbar)$ is given by an order 12 subgroup of the Weil group $W(E_7)$, and for any order 12 subgroup $G\subset W(E_6)$, the Galois cohomology group $\H^1(G,\Z^7)$ has no elements of order 3. One can check that the above surface has local points everywhere and hence no Brauer--Manin obstruction. Indeed $[w:x:y:z]=[0:0:1:1]$ is a rational point.
\end{example}

\subsection{K3 Surfaces}

\begin{proof}[Proof of Corollary \ref{corodd}] Suppose $X(\A)\neq\emptyset$. Then the quadric $Q$ defined by
$$ar^2+bs^2+cu^2+dv^2=0$$
is also everywhere locally soluble, and thus has a $k$-point by the Hasse--Minkowski theorem. The condition that $abcd$ is a square implies that there is a ruling $\pi\colon Q\to\P^1$ over $k$. Under the natural map $\rho\colon X\to Q$, the pullback of a line in the fiber of $\pi$ is a curve of genus 1, making $\pi\circ\rho$ a genus 1 fibration (See \cite{swinn2000}). The generic fiber $X_{k(\P^1)}$ is a degree 8 cover of $Q_{k(\P^1)}$. A choice of a section $s\colon\P^1\to Q$ gives a point $\Spec k(\P^1)\to Q_{k(\P^1)}$. The projection of $\Spec k(\P^1)\times_{Q_{k(\P^1)}} X_{k(\P^1)}$ to $X_{k(\P^1)}$ gives a degree 8 zero-cycle on $X_{k(\P^1)}$. An application of Theorem \ref{main} to $\pi\circ\rho\colon X\to \P^1$ finishes the proof.
\end{proof}

\subsubsection{Construction of the Kummer varieties in Corollary \ref{kum}}\label{constkum} Let $A$ be an abelian variety of dimension $\geq2$ defined over $k$. Let $\psi\colon Y\to A$ be a 2-coverings of $A$. The antipodal involution on $A$ induces an involution $\sigma$ on $Y$. Let $Y'\to Y$ be the blow up along $f^{-1}(0)$. Then $\sigma$ extends to $Y'$; the quotient $X=Y'/\sigma$ is smooth and is called the Kummer variety $Kum(Y)$ attached to $Y$ (see \cite{sz2016} for more details).

\begin{proof}[Proof of Corollary \ref{kum}]Let $\{P_v\}\in X(\A)^B$. It suffices to show that $X(\A)^H\neq\emptyset$ for any finite subgroup $H\subset B+\Br(X)[2^\perp]$. Let $S\subset \Omega$ be a finite set of places such that $H$ pairs trivially with $\CH_0(X_{k_v})$ for any $v\notin S$ (see first paragraph of proof of Theorem \ref{main}). For each $v\in S$, there is a class $\alpha_v\in \H^1(k_v, \mu_2)=k_v^\times/k_v^{\times2}$ such that $P_v$ lifts to a $k_v$-point on the twist $Y'_{\alpha_v}$. By weak approximation, there is a class $\alpha\in \H^1(k, \mu_2)$ which restricts to $\alpha_v$ for each $v\in S$. Hence the twist $f \colon Y'_{\alpha}\to X$ contains a point in $\{Q_v\}_{v\in S}$ mapping to $\{P_v\}_{v\in S}$. Since $Y'_\alpha$ and $Y_\alpha$ are birational, by abuse of notation, we can identify $f^*H$ as a subgroup of $\Br(Y_\alpha)$ and $\{Q_v\}_{v\in S}$ as a point on $Y_\alpha$. Then $$\sum_{v\in S} \inv_v\calA(Q_v)=0$$
for any $\calA\in f^*(H\cap B)=f^*H[2^\infty]$. An application of Lemma \ref{abvar} shows there is a point $\{Q'_v\}_{v\in S}$ such that
$$\sum_{v\in S} \inv_v\calA(Q'_v)=0$$
for any $\calA\in f^*H$. Take any point in the preimage in $Y'_\alpha$ and map it down to $X$ to obtain a point $\{P'_v\}_{v\in S}$. Set $P'_v$ to be any point on $X(k_v)$ for $v\notin S$ to get $\{P'_v\}\in X(\A)^H$.
\end{proof}

\bibliographystyle{alpha}
\bibliography{hiro}

\end{document}